\newtheorem{prop}{Proposition}
\newtheorem{thm}{Theorem}
\newtheorem{rem}{Remark}
\newtheorem{exam}{Example}
\newtheorem{dfn}{Definition}
\title[Group lattices over division rings]{Group lattices over division rings\\ \normalfont{\tiny{A tribute to the renowned Indian Mathematician K.S.S.Nambooripad}}}
\author {P.G. Romeo$^1$ and Alanka Thomas$^2$}
\address{$^1$ Professor, Department of Mathematics, Cochin University of Science and Technology, Kochi, Kerala, INDIA \newline $^{2}$ Research Scholar, Department of Mathematics, Cochin University of Science and Technology, Kochi, Kerala, INDIA.  }
\email{$romeo_-parackal@yahoo.com,\, alankathomas1@gmail.com $}
\thanks{Second author wishes to thank Council of Scientific and Industrial Research(CSIR) INDIA, for providing financial support under JRF scheme through CSIR-UGC NET}
\subjclass[2010]{06B05, 06C05, 18B40}
\keywords{Group, Lattice, Group lattice, Schreier extension}
\begin{document}
\begin{abstract}
Group action on an algebraic structure gives a representation of the group by automorphisms on the structure. An action of a group on a lattice gives a group lattice ($G$-lattice). K.S.S. Nambooripad introduced $G$-lattice in 1990 (cf.\cite{kss}). This paper revisits Nambooripad's construction of $G$-lattice.

\end{abstract}
\maketitle

Prof.K.S.S. Nambooripad (1935-2020) is one of the most celebrated Indian mathematicians who has made significant contributions to semigroup theory. Nambooripad started his research at the University of Kerala, India, in 1965. He submitted a brilliant doctoral dissertation on the structure theory of regular semigroups to the University of Kerala in 1973 and was awarded the doctoral degree in 1974. His thesis was published as a Memoir of the American Mathematical society in 1979 (cf.\cite{kssn}). Nambooripad's contributions to semigroup theory research were immense. He published outstanding research papers in esteemed journals such as Semigroup Forum, Journal of algebra, Proceedings of the Edinburgh Mathematical Society, Pacific Journal, and the like. Professor Nambooripad was a champion of the free software movement and was instrumental in popularizing Latex software in India. He also played a vital role in establishing an internationally reputed school on semigroup theory research in Kerala. 

Nambooripad's significant contributions were on the structure theory of regular semigroups, and he described the structure of regular semigroups in two ways. In the first method, he inspected the structure of a regular semigroup using the notions of biordered sets and inductive groupoids (cf.\cite{kssn}). In the second way, his approach was categorical, where he described the category of principal left and right ideals of regular semigroups, which he called normal categories, and certain categorical relations called cross-connections (cf.\cite{kssna}). By using these normal categories and cross-connections, he again described the structure of regular semigroups in a more abstract (categorical) fashion.

Later, at the beginning of the '90s, Nambooripad studied the action of groups on lattices. He called such a structure a group lattice. The results regarding this appeared in the proceedings of the Monash conference on semigroup theory organized in honor of G.B. Preston in 1990 (cf.\cite{kss}).

Now let us recall that an action of a group on a set is a map from $G\times X\rightarrow X$ which takes $(g,x)$ to $gx$ satisfies the following properties, 
\begin{enumerate}
\item $ex=x$ for all $x\in X$
\item $g(hx)=(gh)x$ for all $g,h\in G$ and $x\in X.$
\end{enumerate}
Then $X$ is called a $G-$set. An action of $G$ on $X$ gives a homomorphism that assigns a bijection on $X$ to each $g$ in $G$. In general, an action of a group on an algebraic structure gives a representation of $G$ by automorphisms of the respective algebraic structure. Nambooripad studied the action of a group on a lattice, and he called the resulting structure a group lattice. This paper is motivated by the mathematical work of Nambooripad (cf.\cite{meakin}). It is a revisit to the group lattice, introduced by Nambooripad, in which we obtained some modifications on results and proofs. We dedicate this to the memory of K.S.S. Nambooripad, who was the thesis adviser of the first author.

We have three sections in this paper. The first section gives the definition and examples of $G-$lattice. The second section is devoted to discussing $G$-lattices over arguesian geomodular lattices, the lattice of submodules of a module $V$ over a division ring $K$ with $rank(V)\geq 3$. If $L\cong L(V)$, $L$ is said to be coordinatized by $V$. If $G$ acts on an arguesian geomodular lattice coordinatized by a module $V$ over $K$, it is called a $G-$lattice over $K$. Nambooripad initiated the study of $G-$lattices over $K$. He established a one-one correspondence between $G-$lattices over $K$ and representations of $G$ over $K$ with rank at least $3$. For the third section, one should have the notion of the Schreier extension. If $G$ and $N$ are two groups, the Schreier extension of $N$ by $G$ is a group $H$, having $N$ as a normal subgroup and $H/N\cong G$. Nambooripad used the concept of Schreier extension (cf.\cite{hall}) of $K$ over $G$ while studying $G$-lattices over $K$ and observed correspondence between Schreier extensions of $K$ by $G$ and $G-$lattices over $K$. The third section studies Schreier extensions and their correspondence with group lattices over a division ring.
\section{Group lattices}
An action of a group on an algebraic structure is crucial as it gives a representation of the group by automorphisms on the structure, in particular a group acting on a lattice gives rise to a group lattice. 
\begin{dfn}\label{dfn1}
Let $G$ be a group and $(L,\leq,\wedge,\vee)$ be a lattice. An action of $G$ on $L$ is defined as $gm\in L$ where $g\in G$ and $m\in L$ which satisfies the following, for each $g,h \in G$ and $m,n\in L$
\begin{enumerate}
\item $g(hm)=(gh)m$
\item $em=m$ where $e$ is the identity in $G$
\item $m\leq n$ if and only if $gm\leq gn$
\item $g(m\wedge n)=gm\wedge gn$
\item $g(m\vee n)= gm\vee gn$.
\end{enumerate}
\end{dfn}
If a group $G$ acts on $L$, $L$ is called a $G-lattice$. A $group \, lattice$ is a $G$-lattice for some group $G$. A $G-sublattice$ $L'$ of a G-lattice $L$ is a sublattice of $L$ which is itself a G-lattice under the same product as that of $L$. For a $G-$sublattice $L'$, $gm\in L'$ for all $g\in G$ and $m\in L'$ and the properties $(1)$ to $(5)$ of Definition:\ref{dfn1} inherit from $L$.

\begin{exam}
Let $G$ be a group and $L(G)$ be the lattice of all subgroups of $G$. Then $L(G)$ is a $G-lattice$ under conjugation, defined by \\$gH=gHg^{-1}=\{ghg^{-1}\mid h\in H\}$ for all $g\in G$ and $H\leqslant G$.
\end{exam}
The sublattice $LN(G)$ of all normal subgroups of $G$ is a $G$-sublattice of $L(G)$. Since, the product $gH=gHg^{-1}=H\in LN(G)$ for all $H\in LN(G)$ and $g\in G$.
\begin{exam}
If $X$ is a $G-set$, the power set $\mathscr{P}(X)$ is a $G-$lattice with respect to the product $gA=\{ga\mid a\in A\}$ for all $g\in G$ and $A\subseteq X$.
\end{exam}
Before moving to more examples, recall some preliminary results required. 
\begin{dfn}
Let $V$ be a module over a division ring $K$. A semilinear transformation $f$ on $V$ is a map $f:V\rightarrow V$ such that,
\begin{enumerate}
\item $f(u+v)=f(u)+f(v)$ for all $u,v \in V$,
\item there is an automorphism $\theta_f$ on $K$ such that $f(\alpha v)=\theta_f(\alpha)f(v)$ for all $v\in V$ and $\alpha\in K$.
\end{enumerate}
\end{dfn}
A bijective semilinear transformation is called a semilinear automorphism. The set of all semilinear automorphisms on $V$ form a group under composition, denoted by $SGL(V)$. A linear transformation $f$ on $V$ is a map $f:V\rightarrow V$ such that,
\begin{enumerate}
\item $f(u+v)=f(u)+f(v)$ for all $u,v \in V$,
\item $f(\alpha v)=\alpha f(v)$ for all $v\in V$ and $\alpha\in K$.
\end{enumerate}
Linear automorphisms are bijective linear transformations, and the set of all linear automorpisms form a group under composition, denoted by $GL(V)$. Since, every linear transformation is a semilinear transformation having $\theta_f$ as the identity automorphism on $K$, the group $GL(V)$ is a subgroup of $SGL(V)$.
\begin{dfn}\label{dfn4}
A semilinear projective representation of a group $G$ is a map $\rho : G\rightarrow SGL(V)$ such that for all $g,h\in G$, there exist $\alpha(g,h)\in K^*$ such that $\rho(g)\rho(h)=\alpha(g,h)\rho(gh)$.
\end{dfn}
The map $\rho$ definied above is not a group homomorphism in general, but when $\alpha(g,h)=1$ for all $g,h\in G$, $\rho$ is a group homomorphism and the representation is called a semilinear representation of the group $G$.

A projective linear representation of $G$ is a semilinear projective representation for which, each $\rho(g)$ is a linear automorphism. Here the codomain of $\rho$ is restricted to the subgroup $GL(V)$. 
A linear representation of a group $G$ is a map $\rho : G\rightarrow GL(V)$ such that $\rho(g)\rho(h)=\rho(gh)$ for all $g,h\in G$. Clearly, a linear representation is both projective and semilinear.
\begin{dfn}
Two semilinear projective representations $\rho$ and $\tilde{\rho}$ of a group $G$ are equivalent, if there exists a map $\eta:G\rightarrow K^*$ such that $\tilde{\rho}(g)=\eta(g)\rho(g)$. 
\end{dfn}
For the remaining part of the paper, a representation of $G$ over $K$ means a semilinear projective representation of $G$ on a module $V$ over $K$.
\begin{exam}\label{exam3}
Let $V$ be module over a division ring $K$ and $\rho: G \rightarrow SGL(V)$ be a representation of $G$ over $K$. Then, the lattice of submodules $L(V)$ of $V$, is a $G-$lattice with respect to the product defined as, $gW= \rho(g)W=\{\rho(g)w\mid w\in W\}$ for all $g\in G$ and $W\in L(V)$. 
\end{exam}
The $G$-lattice $L(V)$ defined above is called a semilinear projective $G$-lattice. 
Similarly  we have linear, projective and semilinear $G$-lattices on $L(V)$, according to the choice of the representations. 
\par It is well known that every group action provides a representation of the group and vice versa, we can observe the same for group lattices also. For that, consider a representation $\rho$ of $G$ by automorphisms on $L$, which is a homomorphism from $G$ to $AutL$, the group of all lattice automorphisms on $L$. Define the product $gm=\rho(g)m$ for all $g\in G$ and $m\in L$. Since $\rho$ is a group homomorphism, the product satisfies properties $(1)$ and $(2)$ in Definition: \ref{dfn1} and the rest follows from the fact that $\rho(g)$ is a lattice automorphism for each $g\in G$. Hence $L$ is a $G-$lattice with respect to this action.
\par For the converse part, consider a $G-$lattice $L$. For each $g\in G$ define  $\rho(g):L\rightarrow L$ such that $\rho(g)m=gm$. Properties $(3)$ to $(5)$ in Definition: \ref{dfn1} implies that $\rho(g)$ is a lattice homomorphism. $\rho(g)m=\rho(g)n$ implies $gm=gn$ and so $m=(g^{-1}g)m=g^{-1}(gm)=g^{-1}(gn)=(g^{-1}g)n=n$. So $\rho(g)$ is injective. $\rho(g)$ is surjective, since $g^{-1}m$ is the preimage of $m$ under $\rho(g)$. Hence $\rho(g)$ is a lattice automorphism for each $g\in G$. From property $(1)$ of Definition: \ref{dfn1}, $\rho(gh)=\rho(g)\rho(h)$ and so the map $\rho: G\rightarrow AutL$ which takes each $g\in G$ to $\rho(g)$ is a group homomorphism. ie., $\rho$ is a representation of $G$ by automorphisms on $L$. Hence there is a one-one correspondence between the $G-$lattices and the representations of $G$ by lattice automorphisms. Thus we can summarize these observations as the following theorem.
\begin{thm}\label{thm1}
Let $G$ be a group and $L$ be a lattice. If $\rho: G\rightarrow AutL$ is a representation of $G$ by automorphisms on $L$, for each $g\in G$ and $m\in L$, the product $gm=\rho(g)m$ is an action of $G$ on $L$ so that $L$ is a $G$-lattice. Conversely if $L$ is a $G$-lattice, the map $\rho: G\rightarrow AutL$ such that $\rho(g)m= gm$ is a representaton of $G$ by automorphisms on $L$.
\end{thm}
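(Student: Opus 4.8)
The plan is to prove the two implications separately — each reduces to a routine check of the five axioms of Definition~\ref{dfn1} — and then to remark that the two constructions are mutually inverse, so that the correspondence is genuinely bijective, as announced before the statement.

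For the forward direction I would begin from a group homomorphism $\rho\colon G\to\operatorname{Aut}L$ and set $gm=\rho(g)m$. Axiom~(1) is immediate from $\rho(gh)=\rho(g)\rho(h)$, since $g(hm)=\rho(g)(\rho(h)m)=(\rho(g)\rho(h))m=\rho(gh)m=(gh)m$. Axiom~(2) holds because a homomorphism carries $e$ to the identity of $\operatorname{Aut}L$, i.e. $\rho(e)=\mathrm{id}_L$. Axioms~(3)–(5) are precisely the assertions that each $\rho(g)$ is a lattice automorphism: it is an order isomorphism, so $m\le n\iff\rho(g)m\le\rho(g)n$, and it commutes with $\wedge$ and $\vee$. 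Hence $L$ is a $G$-lattice under this product.

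For the converse, starting from a $G$-lattice $L$, I would define $\rho(g)\colon L\to L$ by $\rho(g)m=gm$ and verify four points. First, $\rho(g)$ is a lattice homomorphism: preservation of order and of $\wedge,\vee$ is exactly axioms~(3)–(5). Second, $\rho(g)$ is injective: if $gm=gn$, apply $g^{-1}$ and use axioms~(1)–(2) to get $m=(g^{-1}g)m=g^{-1}(gm)=g^{-1}(gn)=n$. Third, $\rho(g)$ is surjective, since $\rho(g)(g^{-1}m)=(gg^{-1})m=em=m$; together with injectivity and the fact that $\rho(g)^{-1}=\rho(g^{-1})$ is again order-preserving (axiom~(3) read in the other direction), this makes $\rho(g)$ a lattice automorphism. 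Fourth, $\rho$ is a homomorphism: $\rho(gh)m=(gh)m=g(hm)=\rho(g)(\rho(h)m)$ for every $m$, whence $\rho(gh)=\rho(g)\rho(h)$. Thus $\rho\colon G\to\operatorname{Aut}L$ is a representation.

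I expect no serious obstacle here; the only point deserving a moment's care is that a bijective lattice homomorphism whose set-theoretic inverse is also order-preserving is indeed an automorphism of lattices — equivalently, that axiom~(3), being an ``if and only if'', forces $\rho(g)$ to \emph{reflect} order and not merely preserve it. To close, I would note that the two passages are mutually inverse — substituting $gm=\rho(g)m$ into the rule $\rho'(g)m:=gm$ recovers $\rho$, and dually — which upgrades the statement to the one-one correspondence between $G$-lattices and representations of $G$ by lattice automorphisms.
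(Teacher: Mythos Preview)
Your proposal is correct and follows essentially the same route as the paper: the forward direction uses the homomorphism property of $\rho$ for axioms~(1)--(2) and the automorphism property of each $\rho(g)$ for axioms~(3)--(5), while the converse defines $\rho(g)m=gm$, checks it is a lattice homomorphism via (3)--(5), obtains bijectivity from $g^{-1}$, and derives $\rho(gh)=\rho(g)\rho(h)$ from axiom~(1). Your added remark that axiom~(3) is an ``if and only if'' (so that $\rho(g)$ reflects order, making its set-theoretic inverse a lattice morphism) and your explicit observation that the two constructions are mutually inverse are small clarifications the paper leaves implicit, but the argument is otherwise the same.
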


\section{G-lattices over a division ring}
Next we focus on group lattices over a division ring. Let us first familiarize the terms and notations used in the sequel. Here a division ring is denoted by $K$, and we use $' V'$ for a module over $K$. The set $L(V)$ of all submodules of $V$ form a lattice, and $L(V)$ is said to be coordinatized by $V$. If $rank(V)\geq 3$, $L(V)$ is called an arguesian geomodular lattice. If a group $G$ acts on an arguesian geomodular lattice, it is called a $G$-lattice over $K$, more generally a group lattice over the division ring $K$.
A lattice morphism $f:L(V)\rightarrow L(V)$ is coordinatizable if there exists a semilinear transformation $f':V\rightarrow V$ such that $fW=f'W=\{f'w\mid w\in W\}$. We assume, all lattice morphisms between arguesian geomodular lattices considered here are coordinatizable.

The following theorem establishes a one-one correspondence between $G$-lattices over $K$ and semilinear projective representations of $G$ over $K$.
\begin{thm}\label{thm2}
Let $G$ be a group and $K$ be a division ring. Every representation of $G$ over $K$ determines a $G$-lattice over $K$. Conversely, given a $G$-lattice $L$ over $K$, there is a representation $\rho$ of $G$ over $K$ that determines $L$.
\end{thm}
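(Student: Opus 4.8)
The plan is to prove the two implications separately: the first is a direct verification, and the second rests on the Fundamental Theorem of Projective Geometry (equivalently, the coordinatization of arguesian geomodular lattices). \emph{From a representation to a $G$-lattice.} Given a representation $\rho\colon G\rightarrow SGL(V)$ of $G$ over $K$, I would take the $G$-lattice of Example~\ref{exam3}, that is, $L(V)$ with $gW=\rho(g)W$, and check the five conditions of Definition~\ref{dfn1}. Conditions (3)--(5) are immediate because every $\rho(g)$ is an additive bijection of $V$: it carries submodules to submodules, preserves and reflects inclusion, and commutes with intersection and sum. For (1), write $g(hW)=\rho(g)\bigl(\rho(h)W\bigr)=\bigl(\rho(g)\rho(h)\bigr)W=\alpha(g,h)\bigl(\rho(gh)W\bigr)$; since $\rho(gh)W$ is a submodule and $\alpha(g,h)\in K^{*}$, left multiplication by the unit $\alpha(g,h)$ fixes it (a submodule over a division ring being stable under multiplication by any unit), so $g(hW)=\rho(gh)W=(gh)W$ and the cocycle washes out at the level of submodules. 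Condition (2) follows the same way, once one notes that $\rho(e)\rho(e)=\alpha(e,e)\rho(e)$ forces $\rho(e)$ to be a scalar operator, which acts trivially on every submodule.

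\emph{From a $G$-lattice to a representation.} Let $L$ be a $G$-lattice over $K$. Being an arguesian geomodular lattice, $L$ is coordinatized by a module $V$ over $K$ with $\mathrm{rank}(V)\geq 3$, so I may assume $L=L(V)$. By Theorem~\ref{thm1} the action of $G$ gives a homomorphism $g\mapsto\rho_{L}(g)$ from $G$ into $\mathrm{Aut}\,L(V)$. Invoking the standing hypothesis that lattice morphisms between arguesian geomodular lattices are coordinatizable, each automorphism $\rho_{L}(g)$ is induced by a semilinear transformation of $V$, which is in fact a semilinear automorphism since $\rho_{L}(g)$ is bijective; I fix one such and call it $\rho(g)\in SGL(V)$.

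It then remains to check that $\rho$ is a semilinear projective representation. Both $\rho(g)\rho(h)$ and $\rho(gh)$ are semilinear automorphisms of $V$ inducing the \emph{same} lattice automorphism of $L(V)$, namely $\rho_{L}(gh)=\rho_{L}(g)\rho_{L}(h)$. The key input is the uniqueness clause of the Fundamental Theorem of Projective Geometry: since $\mathrm{rank}(V)\geq 3$, two semilinear automorphisms inducing the same map on $L(V)$ differ by a left multiplication by an element of $K^{*}$. Hence there is $\alpha(g,h)\in K^{*}$ with $\rho(g)\rho(h)=\alpha(g,h)\rho(gh)$, which is exactly Definition~\ref{dfn4}; and since $\rho(g)W=\rho_{L}(g)W=gW$ by construction, $\rho$ determines $L$. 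I expect the only real obstacle to be this appeal to the representability and the up-to-scalar uniqueness of lattice automorphisms by semilinear automorphisms: one either cites it or reproves that every automorphism of $L(V)$ arises from a semilinear automorphism unique up to a scalar, after which everything is bookkeeping. I would also be careful with the phrase ``one-one correspondence'', since $\rho(g)$ is pinned down only modulo $K^{*}$; the honest statement is a bijection between $G$-lattices over $K$ and representations up to the equivalence defined above, and I would phrase it that way.
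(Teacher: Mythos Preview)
Your proposal is correct and follows essentially the same route as the paper: build $L(V)$ from $\rho$ in the forward direction, and for the converse coordinatize $L$, lift each lattice automorphism to a semilinear automorphism, and then show the lifts satisfy $\rho(g)\rho(h)=\alpha(g,h)\rho(gh)$. The only cosmetic difference is that you invoke the uniqueness clause of the Fundamental Theorem of Projective Geometry for that last step, whereas the paper reproves it inline by taking a rank-one submodule $\langle v\rangle$, extracting a scalar $\alpha_v$ with $\tilde\rho(g)\tilde\rho(h)v=\alpha_v\,\tilde\rho(gh)v$, and using a linearly independent $w$ to show $\alpha_v=\alpha_{v+w}=\alpha_w$; your added detail on axioms (1)--(2) in the forward direction and your caveat about ``up to equivalence'' are not in the paper's proof but only sharpen it.
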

\begin{proof}
Let $\rho:G\rightarrow SGL(V)$ be a representation of $G$ over $K$. Then $L(V)$ is a $G-$lattice with respect to the product $gW=\rho(g)W$ for all $g\in G$ and $W\in L(V)$.\\
Conversely, $L$ be a $G$-lattice, there exist a module $V$ over $K$ having $rank(V)\geq 3$ such that $L\cong L(V)$. By Theorem: \ref{thm1} there exist a representation $\rho:G \rightarrow AutL$ determines the $G-$lattice $L$. Also there is a semilinear automorphism $\tilde{\rho}(g):V\rightarrow V$ that coordinatizes $\rho(g):L\rightarrow L$, and 
$gW=\rho(g)W=\tilde{\rho}(g)W$ for all $g\in G$ and $W\in L$. 
Let $\langle v \rangle$ be the submodule generated by $v\in V$, then 
$$\tilde{\rho}(g)\tilde{\rho}(h)\langle v\rangle = \rho(g)\rho(h)\langle v\rangle= \rho(gh)\langle v\rangle=\tilde{\rho}(gh)\langle v\rangle $$
 and so $\langle\tilde{\rho}(g)\tilde{\rho}(h)v\rangle=\langle\tilde{\rho}(gh)v\rangle$. Then there exist a $\alpha_v\in K^*$ such that $\tilde{\rho}(g)\tilde{\rho}(h)v= \alpha_v  \tilde{\rho}(gh)v$. Let $w\in V$ be linearly independant with $v$. Since $\tilde{\rho}(gh)$ is an automorphism, $\tilde{\rho}(gh)v$ and $\tilde{\rho}(gh)w$ are also linearly independant. Consider 
\[ \begin{array}{lcl}
\mbox{$\alpha_{v+w}\tilde{\rho}(gh)v+\alpha_{v+w}\tilde{\rho}(gh)w$} & = & \mbox{$\alpha_{v+w}\tilde{\rho}(gh)(v+w)
   = \tilde{\rho}(g)\tilde{\rho}(h)(v+w)$} \\
 & = & \mbox{$\tilde{\rho}(g)\tilde{\rho}(h)v + \tilde{\rho}(g)\tilde{\rho}(h)w$} \\
 & = & \mbox{$\alpha_v  \tilde{\rho}(gh)v+\alpha_w  \tilde{\rho}(gh)w$} 
 \end{array}\] 
then $\alpha_v=\alpha_{v+w}=\alpha_w .$
  Similarly for any scalar $\beta$ and $w$ linearly independant with $\beta v$ we get $\alpha_v=\alpha_{\beta v}=\alpha_w$. So the scalar $\alpha=\alpha_v$ is independant of the choice of $v$. Hence $\tilde{\rho}(g)\tilde{\rho}(h)= \alpha \tilde{\rho}(gh)$ and $\tilde{\rho}:G\rightarrow SGL(V)$ is a representation of $G$ over $K$.
\end{proof}
\begin{thm}
If $\rho$ and $\tilde{\rho}$ are two representations of $G$ over $K$, then they determine the same $G-$lattice if and only if they are equivalent.
\end{thm}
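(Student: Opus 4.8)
The plan is to read ``determine the same $G$-lattice'' as the statement that $\rho$ and $\tilde\rho$ induce the \emph{same} action on $L(V)$, i.e. $\rho(g)W=\tilde\rho(g)W$ for every $g\in G$ and every $W\in L(V)$, and to prove the two implications separately. The sufficiency is short: assume $\tilde\rho(g)=\eta(g)\rho(g)$ for some $\eta\colon G\to K^{*}$. Fix $g\in G$ and $W\in L(V)$, and set $W'=\rho(g)W\in L(V)$. Then $\tilde\rho(g)W=\{\eta(g)\rho(g)w\mid w\in W\}=\eta(g)W'$. Since $W'$ is a submodule it is closed under scalar multiplication, so $\eta(g)W'\subseteq W'$; applying the same to $\eta(g)^{-1}$ gives $W'\subseteq\eta(g)W'$, whence $\eta(g)W'=W'$. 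Thus $\tilde\rho(g)W=\rho(g)W$, and $\rho,\tilde\rho$ determine the same $G$-lattice.

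For the necessity, assume $\rho(g)W=\tilde\rho(g)W$ for all $g\in G$ and $W\in L(V)$. Fix $g\in G$ and put $\sigma=\rho(g)^{-1}\tilde\rho(g)\in SGL(V)$. Applying the hypothesis with $W=\langle v\rangle$, the rank-one submodule generated by a vector $v\neq 0$, gives $\sigma\langle v\rangle=\langle v\rangle$, so $\sigma v=\lambda_{v}v$ for a unique $\lambda_{v}\in K^{*}$. I would now reuse the linear-independence computation from the proof of Theorem~\ref{thm2}: if $v,w$ are linearly independent, additivity of $\sigma$ forces $\lambda_{v}=\lambda_{v+w}=\lambda_{w}$, and since $\mathrm{rank}(V)\geq 3$ one can always pick such a $w$ that is moreover linearly independent of a prescribed scalar multiple $\beta v$, giving $\lambda_{v}=\lambda_{w}=\lambda_{\beta v}$. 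Hence $\lambda_{v}=:\lambda$ does not depend on $v$, so $\sigma$ is the scalar map $u\mapsto\lambda u$ on all of $V$. Consequently $\tilde\rho(g)=\rho(g)\sigma$, and evaluating at $u\in V$ yields $\tilde\rho(g)u=\rho(g)(\lambda u)=\theta_{\rho(g)}(\lambda)\,\rho(g)u$; setting $\eta(g)=\theta_{\rho(g)}(\lambda)\in K^{*}$ gives $\tilde\rho(g)=\eta(g)\rho(g)$. Letting $g$ range over $G$ defines the map $\eta\colon G\to K^{*}$ exhibiting $\rho$ and $\tilde\rho$ as equivalent.

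The routine ingredients---closure of a submodule under the scalar action, and the additivity manipulations---are immediate. The one step needing genuine care is in the necessity direction: one must confirm that a semilinear automorphism fixing every rank-one submodule really is a scalar map, which is exactly where $\mathrm{rank}(V)\geq 3$ enters (to guarantee enough linearly independent vectors), and where the semilinear twist $\theta_{\sigma}$ must be tracked so that the resulting scalar $\eta(g)=\theta_{\rho(g)}(\lambda)$ lands in $K^{*}$ and sits on the side of $\rho(g)$ dictated by Definition~\ref{dfn4}. I expect this bookkeeping with the automorphisms $\theta$ and the side on which scalars act to be the main obstacle; beyond that, the argument simply follows the template already carried out in Theorem~\ref{thm2}.
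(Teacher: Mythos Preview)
Your proof is correct and follows essentially the same route as the paper: both directions rest on looking at rank-one submodules and invoking the linear-independence computation from Theorem~\ref{thm2}. The paper is marginally more direct in the necessity step, writing $\tilde\rho(g)v=\alpha_v\,\rho(g)v$ immediately from $\langle\tilde\rho(g)v\rangle=\langle\rho(g)v\rangle$ rather than passing through $\sigma=\rho(g)^{-1}\tilde\rho(g)$, which spares the closing $\theta_{\rho(g)}(\lambda)$ bookkeeping you carry out.
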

\begin{proof}
If $\rho$ and $\tilde{\rho}$ are equivalent, there exist $\eta : G\rightarrow K^*$ such that $\tilde{\rho}(g)=\eta(g)\rho(g)$. Then $\tilde{\rho}(g)W=\eta(g)\rho(g)W=\rho(g)W$ for all $g\in G$ and $W\in L(V)$ and so $\rho$ and $\tilde{\rho}$ induce the same $G-$lattice.

Conversely if $\rho$ and $\tilde{\rho}$ induce the same $G-$lattice, then for any $v\in V$, $\langle\rho(g)v\rangle=\rho(g)\langle v\rangle=\tilde{\rho}(g)\langle v\rangle= \langle\tilde{\rho}(g)v\rangle$. 
Similarly as in the proof of Theorem: \ref{thm2}, there exist $\alpha_g \in K^*$ independant of $v$ such that 
$\tilde{\rho}(g)v=\alpha_g\rho(g)v$ and so $\tilde{\rho}(g)=\alpha_g\rho(g)$. $\eta:G\rightarrow K^*$ such that $\eta(g)=\alpha_g$ satisfies the equation $\tilde{\rho}(g)=\eta(g)\rho(g)$ and so $\rho$ and $\tilde{\rho}$ are equivalent.
\end{proof}

\section{Schreier extension and associated Group lattices}
Let $N$ and $G$ be groups. A Schreier extension of $N$ by $G$  is a group $H$ having $N$ as a normal subgroup and $H/N\cong G$ (cf.\cite{hall}). Let $H$ be a group having $N$ as a normal subgroup and $H/N\cong G$. Then 
$H=\bigcup\limits_{g\in G} N\overline{g}=\{a\overline{g}\mid a\in N \text{ and } g\in G\}$, where $\{\overline{g}\mid g\in G\}$ is the set of all representatives from each coset of $N$. For each $g\in G$ the map $b\rightarrow \overline{g}b\overline{g}^{-1}$ is a group automorphism in $N$ and let it be denoted by $\chi(g)$. Also since $\overline{gh}$ is the representative of the coset containing $\overline{g}\overline{h}$ there exist $[g,h]\in N$ such that $\overline{g}\overline{h}=[g,h]\overline{gh}$. Then $(a\overline{g})(b\overline{h})=a\overline{g}b\overline{g}^{-1}\overline{g}\overline{h}=a\chi(g)(b)[g,h]\overline{gh}$ and $H$ is a group with respect to this binary operation. Hence we have functions,
\begin{enumerate}
\item $\chi : G\rightarrow AutN$ defined by $\chi(g)(a)=\overline{g}a\overline{g}^{-1}$
\item $[_-,_-]:G\times G\rightarrow N$ such that $\overline{g}\overline{h}=[g,h]\overline{gh}$.
\end{enumerate}
and we can see that $\chi$ and $[_-,_-]$ has the following properties
\begin{itemize}
\item[$E1)$] $\chi(g)\chi(h)=[g,h]\chi(gh)[g,h]^{-1}$
\item[$E2)$] $[g,h][gh,k]=\chi(g)([h,k])[g,hk]$
\item[$E3)$] $[1,1]=1.$
\end{itemize}
\par In general, if $\chi : G\rightarrow AutN$ and $[_-,_-]:G\times G\rightarrow N$ are function satisfying above properties, $H=\{a\overline{g}\mid a\in N \text{ and } g\in G\}$ is a group with respect to the binary operation $(a\overline{g})(b\overline{h})=a\chi(g)(b)[g,h]\overline{gh}$. This is the construction of Schreier extension of $N$ by $G$. The pair of functions $(\chi,[_-,_-])$ called a factor system for $(N,G)$ and the extension is denoted by $H(\chi,[_-,_-])$. Factor systems $(\chi,[_-,_-])$ and $(\chi ',[_-,_-]')$ are equivalent, if there exists a map $\mu :G\rightarrow N$ such that 
\begin{enumerate}
\item[$E4)$] $\chi '(g)=\mu(g)^{-1}\chi(g)\mu(g)$
\item[$E5)$] $[g,h]\mu(gh)=\mu(g)\chi '(g)(\mu(h))[g,h]'$
\item[$E6)$] $\mu(1)=1$.
\end{enumerate}
If $(\chi,[_-,_-])$ and $(\chi ',[_-,_-]')$ are equivalent, the map $a\overline{g}\rightarrow a\mu(g)\tilde{g}$ from $H(\chi,[_-,_-])$ to $H(\chi ',[_-,_-]')$ is an isomorphism. Two factor systems are equivalent if and only if corresponding extensions are isomorphic.
\begin{dfn}
A central extension of $N$ by $G$ is an extension $H(\chi,[_-,_-])$ for which, $[g,h]$ are in the center of $N$, for each $g,h$ in G. 
\end{dfn}
Using $(E1)$, it is seen that the map $\chi: G\rightarrow AutN$ is a homomorphism for a central extension. If $N$ is an abelian group, every Schreier extension of $N$ is a central extension. 

If $\chi(g)=I_N$ for all $g\in G$, the binary operation in $H$ reduces to $(a\overline{g})(b\overline{h})=ab[g,h]\overline{gh}$ and $H$ is called a projective extension of $N$ by $G$. If $[g,h]=1$ for all $g,h\in G$, then $H$ is the semidirect product of $N$ and $G$, this type of extensions are called split extensions or semidirect products. It is easy to see that projective extensions and semidirect products are central extensions.  The direct product of $N$ and $G$ is the Schreier extension of $N$ by $G$ which is both projective and split.
\par Here we deal with Schreier extension of $K^*$ by $G$, where $K^*$ is the group of nonzero elements of a division ring under multiplication. We shall say that $H(\chi,[_-,_-])$ is an extension of $K$ by $G$ and that $(\chi,[_-,_-])$ is a factor system for $(K,G)$, if it is a factor system for $(K^*,G)$ with the property that $\chi(g)\in AutK$ for each $g\in G$. ie., every representation of $G$ over $K$ gives a Schreier extension of $K$ by $G$. For a semilinear projective representation $\rho$, each $\rho(g)$ is a semilinear transformation. So, there exist automorphism $\theta_{\rho(g)}$ on $K$ such that $\rho(g)(\alpha v)=\theta_{\rho(g)}(\alpha)\rho(g)(v)$. For brevity, let it be denoted by $\chi(g)$. Since the representation is projective, for each $g,h\in G$ there exist $[g,h]\in K^*$ such that $\rho(g)\rho(h)=[g,h]\rho(gh)$. We will prove in Proposition: \ref{prop1} that this $\chi$ and $[_-,_-]$ together form a factor system for an extension of $K$ by $G$.
\begin{prop}\label{prop1}
Let $\rho$ be a representation of $G$ over $K$. Then $(\chi,[_-,_-])$ is a factor system for $(K,G)$. 
\end{prop}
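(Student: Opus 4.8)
The plan is to extract $\chi$ and $[_-,_-]$ from $\rho$ exactly as described in the paragraph preceding the statement --- $\chi(g)=\theta_{\rho(g)}\in\mathrm{Aut}\,K$ is the automorphism with $\rho(g)(\alpha v)=\chi(g)(\alpha)\rho(g)(v)$, and $[g,h]\in K^*$ is the projective cocycle with $\rho(g)\rho(h)=[g,h]\rho(gh)$ --- and then to verify $E1)$--$E3)$ by playing the semilinearity of each $\rho(g)$ against the projectivity relation, tracking carefully the side on which scalars sit, since $K$ need not be commutative. Two elementary facts about the nonzero module $V$ over the division ring $K$ do all the work: a semilinear transformation has a \emph{unique} associated automorphism (if $f(\alpha v)=\theta(\alpha)f(v)=\theta'(\alpha)f(v)$ for all $v$, pick $v$ with $f(v)\neq 0$), and scalars cancel against a nonzero vector ($\alpha u=\beta u$ with $u\neq 0$ forces $\alpha=\beta$).

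\textbf{Verifying $E1)$.} I would compute the automorphism attached to the semilinear map $\rho(g)\rho(h)$ in two ways. Composing semilinear maps, $(\rho(g)\rho(h))(\alpha v)=\rho(g)\bigl(\chi(h)(\alpha)\,\rho(h)v\bigr)=\chi(g)\bigl(\chi(h)(\alpha)\bigr)\,(\rho(g)\rho(h))(v)$, so the attached automorphism is $\chi(g)\chi(h)$. On the other hand, using $\rho(g)\rho(h)=[g,h]\rho(gh)$ gives $[g,h]\rho(gh)(\alpha v)=\bigl([g,h]\,\chi(gh)(\alpha)\bigr)\rho(gh)(v)$, and re-expressing the right side in terms of the vector $[g,h]\rho(gh)(v)$ shows the attached automorphism is $\alpha\mapsto[g,h]\,\chi(gh)(\alpha)\,[g,h]^{-1}$; the conjugation is forced exactly because $[g,h]$ and $\chi(gh)(\alpha)$ need not commute. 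By uniqueness of the attached automorphism these agree, which is $E1)$.

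\textbf{Verifying $E2)$ and $E3)$.} For $E2)$ I would evaluate $\rho(g)\rho(h)\rho(k)$ on an arbitrary $w\in V$ and associate the product both ways. On the left, $(\rho(g)\rho(h))\rho(k)=[g,h]\rho(gh)\rho(k)=[g,h][gh,k]\,\rho(ghk)$; on the right, $\rho(g)(\rho(h)\rho(k))=\rho(g)\bigl([h,k]\rho(hk)\bigr)$, and moving $\rho(g)$ past the scalar $[h,k]$ by semilinearity yields $\chi(g)([h,k])\,[g,hk]\,\rho(ghk)$ --- the scalar $\chi(g)([h,k])$ must stay to the left of $[g,hk]$. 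Evaluating at a $w$ with $\rho(ghk)(w)\neq 0$ and cancelling that vector gives $[g,h][gh,k]=\chi(g)([h,k])[g,hk]$, which is $E2)$. For $E3)$ we take $\rho(1)=I_V$ (part of the meaning of a representation, or arranged by passing to an equivalent representation, since $\rho(1)\rho(1)=[1,1]\rho(1)$ already forces $\rho(1)$ to be the scalar map $[1,1]I_V$); then $\rho(1)\rho(1)=I_V=1\cdot\rho(1)$ and cancelling a nonzero vector in the image gives $[1,1]=1$. Since $\chi(g)\in\mathrm{Aut}\,K$ by definition of a semilinear transformation and $[g,h]\in K^*$ because $\rho$ is projective, the pair $(\chi,[_-,_-])$ is a factor system for $(K,G)$, and not merely for $(K^*,G)$.

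I do not expect a genuine obstacle; the one point requiring care --- and the crux I would flag --- is the non-commutativity bookkeeping. Each time a scalar is pushed past a vector or past some $\rho(g)$, its side must be tracked, and it is precisely this that produces the inner-automorphism twist in $E1)$ and fixes the order of $\chi(g)([h,k])$ and $[g,hk]$ in $E2)$. Lining up the conventions for composition of maps and for the inner automorphism $x\mapsto cxc^{-1}$ so that the computation reads off as $E1)$ literally stated is the only place a direction error could slip in, so I would write those two steps out in full.
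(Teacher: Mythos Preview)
Your proposal is correct and follows essentially the same approach as the paper: both compute $\rho(g)\rho(h)(\alpha v)$ two ways to obtain $E1)$, associate $\rho(g)\rho(h)\rho(k)$ two ways to obtain $E2)$, and dispatch $E3)$ quickly. You are somewhat more explicit than the paper in invoking uniqueness of the attached automorphism and in justifying $\rho(1)=I_V$ (the paper simply writes ``It is clear that $[1,1]=1$''), but the argument is the same.
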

\begin{proof}
Given $\rho$ is a representation of $G$ over $K$. For each $g\in G$ there exist an automorphism $\chi(g)$ of $K$ such that $\rho(g)(\alpha v)=\chi(g)(\alpha)\rho(g)(v)$ and $\rho(g)\rho(h)=[g,h]\rho(gh)$ for all $g,h\in G$. $(\chi, [_-,_-])$ is a factor system if it satisfies properties $(E1)-(E3)$.
\[ \begin{array}{lcl}
\mbox{$\chi(g)(\chi(h)(\alpha))\rho(g)\rho(h)(v)$} & = & \mbox{$\rho(g)\rho(h)(\alpha v)=[g,h]\chi(gh)(\alpha)\rho(gh)(v)$} \\
 & = & \mbox{$[g,h]\chi(gh)(\alpha)[g,h]^{-1}[g,h]\rho(gh)(v)$} \\
 & = & \mbox{$[g,h]\chi(gh)(\alpha)[g,h]^{-1}\rho(g)\rho(h)(v)$} 
 \end{array}\] 
Hence $\chi(g)\chi(h)=[g,h]\chi(gh)[g,h]^{-1}$ for all $g,h\in G$.
\[ \begin{array}{lcl}
 \mbox{$[g,h][gh,k]\rho(ghk)(v)$}  & = & \mbox{$[g,h]\rho(gh)(\rho(k)(v) = \big(\rho(g)\rho(h)\big)\rho(k)(v)$}\\
  & = & \mbox{$\rho(g)\big(\rho(h)\rho(k)\big)(v) = \rho(g)\big([h,k]\rho(hk)\big)(v)$}\\
  & = & \mbox{$\chi(g)([h,k])\rho(g)\rho(hk)(v)$}\\
  & = & \mbox{$\chi(g)([h,k])[g,hk]\rho(ghk)(v)$}.
 \end{array}\] 

Hence $[g,h][gh,k]\rho(ghk)(v)=[g,hk]\chi(g)[h,k]\rho(ghk)(v)$ for $g,h,k\in G$ and $v\in V$ and so $[g,h][gh,k]=[g,hk]\chi(g)[h,k]$. It is clear that $[1,1]=1$. Hence $(\chi,[_-,_-])$ is a factor system for $(K,G)$.
\end{proof}
\par A representation $\rho$ of $G$ over $K$ is said to be associated with a factor system $(\chi,[_-,_-])$, if $\chi(g)=\theta_{\rho(g)}$ and $\rho(g)\rho(h)=[g,h]\rho(gh)$ for each $g,h\in G$. A representation is projective if and only if the Schreier extension associated with it is projective. Similarly, a representation is semilinear if and only if the Schreier extension associated with it is a semidirect product. So, direct product $K\times G$ is the Schreier extension of $K$ by $G$ associated with a linear representation of $G$ over $K$. Equivalent representations always give equivalent factor systems. Proposition: \ref{prop2} deals with this observation. 
\begin{prop}\label{prop2}
Let $(\chi,[_-,_-])$ and $(\chi ',[_-,_-]')$ be factor systems for $(K,G)$ and $\rho$ be the representation of $G$ over $K$ associated with $(\chi,[_-,_-])$. Then $(\chi,[_-,_-])$ is equivalent to $(\chi ',[_-,_-]')$ if and only if  there is a representation of $\tilde{\rho}$ associated with $(\chi ',[_-,_-]')$ which is equivalent to $\rho$.
\end{prop}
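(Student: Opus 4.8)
The plan is to prove the two implications separately, exploiting the fact that an equivalence of factor systems and an equivalence of representations are recorded by ``mutually inverse'' maps $G\to K^*$. Recall the data involved: $(\chi,[_-,_-])$ is equivalent to $(\chi',[_-,_-]')$ precisely when there is $\mu\colon G\to K^*$ satisfying $(E4)$--$(E6)$, whereas $\rho$ and $\tilde{\rho}$ are equivalent precisely when there is $\eta\colon G\to K^*$ with $\tilde{\rho}(g)=\eta(g)\rho(g)$. The bridge between the two conditions will be the substitution $\mu(g)=\eta(g)^{-1}$. I will use repeatedly that, since $\chi(g)=\theta_{\rho(g)}$, pulling a scalar through $\rho(g)$ twists it by $\chi(g)$, i.e.\ $\rho(g)(\alpha v)=\chi(g)(\alpha)\rho(g)(v)$, and likewise $\tilde{\rho}(g)(\alpha v)=\chi'(g)(\alpha)\tilde{\rho}(g)(v)$ when $\tilde{\rho}$ is associated with $(\chi',[_-,_-]')$.

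For the forward implication, suppose $\mu\colon G\to K^*$ witnesses $(\chi,[_-,_-])\sim(\chi',[_-,_-]')$. I would define $\tilde{\rho}(g):=\mu(g)^{-1}\rho(g)$ for each $g\in G$. Each $\tilde{\rho}(g)$ is a nonzero scalar multiple of the semilinear automorphism $\rho(g)$, hence again lies in $SGL(V)$; once $\tilde{\rho}$ is shown to be associated with $(\chi',[_-,_-]')$ --- which in particular makes it a representation of $G$ over $K$ --- it is automatically equivalent to $\rho$ via $\eta=\mu^{-1}$, so that is the task. First, from $\tilde{\rho}(g)(\alpha v)=\mu(g)^{-1}\chi(g)(\alpha)\rho(g)(v)=\big(\mu(g)^{-1}\chi(g)(\alpha)\mu(g)\big)\tilde{\rho}(g)(v)$ one reads off $\theta_{\tilde{\rho}(g)}=\mu(g)^{-1}\chi(g)(\cdot)\mu(g)$, which is exactly $\chi'(g)$ by $(E4)$. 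Second, expanding $\tilde{\rho}(g)\tilde{\rho}(h)(v)=\mu(g)^{-1}\chi(g)(\mu(h)^{-1})[g,h]\rho(gh)(v)$ and comparing with $[g,h]'\tilde{\rho}(gh)(v)=[g,h]'\mu(gh)^{-1}\rho(gh)(v)$, the relation $\tilde{\rho}(g)\tilde{\rho}(h)=[g,h]'\tilde{\rho}(gh)$ reduces --- after cancelling the common factor $[g,h]$, using that $\chi(g)$ is a ring automorphism, and invoking $(E4)$ to rewrite $\chi'(g)(\mu(h)^{-1})$ as $\mu(g)^{-1}\chi(g)(\mu(h)^{-1})\mu(g)$ --- to a rearrangement of $(E5)$. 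Thus $\tilde{\rho}$ is the representation sought.

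For the converse, suppose $\tilde{\rho}$ is associated with $(\chi',[_-,_-]')$ and $\tilde{\rho}(g)=\eta(g)\rho(g)$ for some $\eta\colon G\to K^*$; set $\mu(g):=\eta(g)^{-1}$. Comparing twist automorphisms, $\tilde{\rho}(g)(\alpha v)=\eta(g)\chi(g)(\alpha)\rho(g)(v)=\eta(g)\chi(g)(\alpha)\eta(g)^{-1}\tilde{\rho}(g)(v)$ together with $\chi'(g)=\theta_{\tilde{\rho}(g)}$ gives $\chi'(g)(\alpha)=\eta(g)\chi(g)(\alpha)\eta(g)^{-1}=\mu(g)^{-1}\chi(g)(\alpha)\mu(g)$, which is $(E4)$. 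Next, expanding $\tilde{\rho}(g)\tilde{\rho}(h)(v)=\eta(g)\chi(g)(\eta(h))[g,h]\rho(gh)(v)$ and equating with $[g,h]'\tilde{\rho}(gh)(v)=[g,h]'\eta(gh)\rho(gh)(v)$ yields $[g,h]'=\eta(g)\chi(g)(\eta(h))[g,h]\eta(gh)^{-1}$; substituting $\eta=\mu^{-1}$ and using $(E4)$ to turn the $\chi'(g)$-twist back into a $\chi(g)$-conjugate converts this equation into $(E5)$. Finally, for $(E6)$: since $[1,1]=1$ and $[1,1]'=1$, both $\rho(1)$ and $\tilde{\rho}(1)$ satisfy $f\circ f=f$ and, being invertible, equal $I_V$; then $I_V=\tilde{\rho}(1)=\eta(1)\rho(1)=\eta(1)I_V$ forces $\eta(1)=1$, i.e.\ $\mu(1)=1$. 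Hence $\mu$ witnesses $(\chi,[_-,_-])\sim(\chi',[_-,_-]')$.

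The computations above are all elementary, so the only place where care is genuinely needed --- and essentially the only place an error can creep in --- is the noncommutativity of $K$: a scalar cannot simply be slid past $\rho(g)$ (it gets twisted by $\chi(g)$), and in each conjugation one must track which side carries the inverse. Once the identification $\mu=\eta^{-1}$ is fixed, $(E4)$ and $(E5)$ both drop out of straightforward manipulation that uses only that each $\chi(g)$ is a ring automorphism; the one mildly non-routine ingredient is $(E6)$, which is not built into the definition of equivalence of representations and has to be recovered from the factor-system normalization $[1,1]=1$ forcing the identity element of $G$ to act as $I_V$.
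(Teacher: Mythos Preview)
Your proof is correct and follows essentially the same approach as the paper's: in both directions one passes between the equivalence map for representations and the equivalence map for factor systems (the paper writes $\rho(g)=\mu(g)\tilde{\rho}(g)$ directly, which amounts to your substitution $\mu=\eta^{-1}$) and verifies the axioms by direct computation using semilinearity. Your argument is in fact slightly more complete than the paper's, since you explicitly derive $(E6)$ from $[1,1]=1$, whereas the paper simply asserts that $\mu$ satisfies $(E4)$--$(E6)$ after checking only the first two.
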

\begin{proof}
Let $\rho$ and $\tilde{\rho}$ be representation of $G$ over $K$ associated with factor systems $(\chi,[_-,_-])$ and $(\chi ',[_-,_-]')$ for $(K,G)$ respectively. If $\rho$ is equivalent to $\tilde{\rho}$, there exist a map $\mu :G\rightarrow K^*$ such that $\rho(g)=\mu(g)\tilde{\rho}(g)$. 
Then $\mu$ is an equivalence of the factor system $(\chi,[_-,_-])$ with $(\chi ',[_-,_-]')$; for, consider,
\begin{center}
$\mu(g)\chi'(g)(\alpha)\tilde{\rho}(g)(v)=\mu(g)\tilde{\rho}(g)(\alpha v)=\rho(g)(\alpha v)=\chi(g)(\alpha)\mu(g)\tilde{\rho}(g)(v)$. 
\end{center}
So $\mu(g)\chi'(g)=\chi(g)\mu(g)$ for all $g\in G$ and this proves $(E4)$. Now consider 
\[ \begin{array}{lcl}
 \mbox{$[g,h]\mu(gh)\tilde{\rho}(gh)(v)$}  & = & \mbox{$\rho(g)\rho(h)(v)=\mu(g)\tilde{\rho}(g)\mu(h)\tilde{\rho}(h)(v)$}\\
  & = & \mbox{$\mu(g)\chi'(g)(\mu(h))[g,h]'\tilde{\rho}(gh)(v)$}
 \end{array}\] 
Hence $[g,h]\mu(gh)=\mu(g)\chi'(g)(\mu(h))[g,h]'$ and this proves $(E5)$. $\mu$ satisfies $(E4)-(E6)$ and so $\mu$ is an equivalence between the factor systems $(\chi,[_-,_-])$ and $(\chi ',[_-,_-]')$.\\
Conversely let $(\chi,[_-,_-])$ and $(\chi ',[_-,_-]')$ be equivalent and $\rho$ be a representation of $G$ over $K$ associated with $(\chi,[_-,_-])$. There exist an equivalence $\mu:G\rightarrow K^*$ from $(\chi',[_-,_-]')$ and $(\chi,[_-,_-])$ satisfying $(E4)-(E6)$. For each $g\in G$ define $\tilde{\rho}(g)=\mu(g)\rho(g)$, then $\tilde{\rho}$ is a representation of $G$ over $K$. Since each $g\in G$, $\tilde{\rho}(g)$ preserves addition as in the case of $\rho(g)$. Now using $(E4)$ we get\\
\\
$\tilde{\rho}(g)(\alpha v)=\mu(g)\chi(g)(\alpha)\rho(g)(v)=\chi'(g)(\alpha)\mu(g)\rho(g)(v)=\chi'(g)(\alpha)\tilde{\rho}(g)(v)$\\
\\and so $\tilde{\rho}(g)$ is a semilinear transformation for each $g\in G$. Using the bijectivity of $\rho(g)$, $\tilde{\rho}(g)$ is also bijective and so $\tilde{\rho}(g)$ is a semilinear automorphism for each $g\in G$.
\[ \begin{array}{lcl}
\mbox{$\tilde{\rho}(g)\tilde{\rho}(h)(v)$} & = & \mbox{$\mu(g)\rho(g)\mu(h)\rho(h)(v)=\mu(g)\chi(g)(\mu(h))[g,h]\rho(gh)(v)$} \\
 & = & \mbox{$[g,h]'\mu(gh)\rho(gh)(v)=[g,h]'\tilde{\rho}(gh)(v)$}
 \end{array}\] 
 Hence $\tilde{\rho}(g)\tilde{\rho}(h)=[g,h]'\tilde{\rho}(gh)$ and $\tilde{\rho}$ is a representation of $G$ over $K$. It is clear that $\tilde{\rho}$ is associated with the factor system $(\chi',[_-,_-])$. Since $\mu:G\rightarrow K^*$ is a map such that $\tilde{\rho}(g)=\mu(g)\rho(g)$ for all $g\in G$, $\rho$ and $\tilde{\rho}$ are equivalent representations of $G$ over $K$. Hence the proof.
\end{proof}
Theorem: \ref{thm2} and Proposition: \ref{prop1} together tells us that every $G$-lattice over $K$ gives us a Schreier extension of $K$ by $G$. The converse is also true. We need to have the following definition of a twisted group ring to prove that.
\begin{dfn}
Let $H=H(\chi,[_-,_-])$ be an extension of $K$ by $G$. The twisted group ring $K(G;H)$ is the free module generated by the set $\{\bar{g}\mid g\in G\}$ over $K$.
\end{dfn}
For $u= \sum\limits_{g\in G}a_g\bar{g}$ and $v= \sum\limits_{h\in G}b_h\bar{h}$ in $K(G;H)$ and $c\in K$ we have, \\
\begin{center}
$u+v=\sum\limits_{g\in G}(a_g+b_g)\bar{g} \qquad cu=\sum\limits_{g\in G} ca_g\bar{g}$  \\
$uv=\sum\limits_{k\in G}c_k\bar{k}$ where $c_k=\sum\limits_{gh=k}a_g\chi(g)(b_h)[g,h]$
\end{center}
The above product is obtained by distributively extending the product in $H$. $K(G;H)$ is an associative ring and need not be an algebra in general. If factor systems $(\chi,[_-,_-])$ and $(\chi',[_-,_-]')$ are equivalent, $H(\chi,[_-,_-])$ and $H(\chi',[_-,_-]')$ are isomorphic and so $K(G;H)$ and $K(G;H')$ are also isomorphic. Hence $K(G;H)$ is uniquely determined by an extension $H$.
\begin{thm}\label{thm4}
Let $K$ be a division ring and $G$ a group. Every $G-$lattice $L$ over $K$ determines a unique extension of $K$ by $G$. Conversely, if $H$ is an extension of $K$ by $G$, there is a $G-$lattice $L$ over $K$ such that $H$ is determined by $L$.
\end{thm}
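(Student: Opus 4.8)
The plan is to deduce the forward direction by composing results already established, and to handle the converse by exhibiting an explicit module and representation built from the twisted group ring. For the forward direction, let $L$ be a $G$-lattice over $K$. By Theorem~\ref{thm2} there is a representation $\rho$ of $G$ over $K$ determining $L$, and by Proposition~\ref{prop1} the pair $(\chi,[_-,_-])$ attached to $\rho$ (with $\chi(g)=\theta_{\rho(g)}$ and $\rho(g)\rho(h)=[g,h]\rho(gh)$) is a factor system for $(K,G)$; hence it determines an extension $H=H(\chi,[_-,_-])$ of $K$ by $G$. For the word ``unique'' I would argue: any two representations of $G$ over $K$ that determine $L$ are equivalent (by the theorem characterising when two representations induce the same $G$-lattice), equivalent representations have equivalent factor systems (Proposition~\ref{prop2}), and equivalent factor systems give isomorphic extensions (recorded earlier in this section). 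Thus the isomorphism type of $H$ depends only on $L$.

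For the converse, suppose $H=H(\chi,[_-,_-])$ is an extension of $K$ by $G$. It suffices to produce a module $V$ over $K$ with $\mathrm{rank}(V)\geq 3$ and a representation $\rho\colon G\to SGL(V)$ whose associated factor system is exactly $(\chi,[_-,_-])$; then Theorem~\ref{thm2} turns $\rho$ into a $G$-lattice $L=L(V)$ over $K$, and applying the forward construction of the present theorem to $L$ returns $H(\chi,[_-,_-])=H$. I would take $V$ to be the twisted group ring $K(G;H)$, viewed as a left $K$-module with basis $\{\overline g\mid g\in G\}$, and let $\rho(g)\colon V\to V$ be left multiplication by $\overline g$ inside $K(G;H)$, so $\rho(g)\big(\sum_h b_h\overline h\big)=\sum_h\chi(g)(b_h)[g,h]\,\overline{gh}$. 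In other words, $\rho$ is the ``regular'' projective representation carried by the twisted group ring.

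The verifications are then routine: $\rho(g)$ is additive because $\chi(g)$ is; for $\alpha\in K$ one computes $\rho(g)(\alpha v)=\chi(g)(\alpha)\rho(g)(v)$, so $\rho(g)$ is semilinear with $\theta_{\rho(g)}=\chi(g)$; associativity in $K(G;H)$ together with $\overline g\,\overline h=[g,h]\overline{gh}$ gives $\rho(g)\rho(h)(v)=(\overline g\,\overline h)v=[g,h]\,\overline{gh}\,v=[g,h]\rho(gh)(v)$, i.e.\ $\rho(g)\rho(h)=[g,h]\rho(gh)$; and since $(E3)$ (with $(E1)$, $(E2)$) forces $\overline 1$ to be the identity of the group $H$, we get $\rho(1)=\mathrm{id}_V$, whence $\rho(g)\rho(g^{-1})=[g,g^{-1}]\,\mathrm{id}_V$ and $\rho(g^{-1})\rho(g)=[g^{-1},g]\,\mathrm{id}_V$ are invertible, so each $\rho(g)$ is a semilinear automorphism. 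If $G$ is small enough that $K(G;H)$ has rank below $3$, replace $V$ by $V\oplus V\oplus V$ with $\rho$ acting diagonally; every property above is inherited coordinatewise and now $\mathrm{rank}(V)\geq 3$. So $\rho$ is a representation of $G$ over $K$ associated with $(\chi,[_-,_-])$, and Theorem~\ref{thm2} together with the forward part completes the converse.

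The main obstacle is the converse, and within it the choice of model: one must find a concrete module on which $G$ acts semilinearly with precisely the prescribed twisting $\chi$ and cocycle $[_-,_-]$, and the twisted group ring is the natural candidate for this. The delicate points are that $K$ is noncommutative, so coefficients have to be kept on one side and $\rho(g)$ is genuinely semilinear rather than linear; that the bijectivity of $\rho(g)$ rests on the normalisation $\overline 1=e_H$ coming from $(E3)$; and the (harmless) rank adjustment for very small $G$. Everything else is the bookkeeping of stringing together Theorems~\ref{thm1} and~\ref{thm2}, Proposition~\ref{prop2}, and the factor-system/extension isomorphism criterion.
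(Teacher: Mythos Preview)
Your proposal is correct and follows essentially the same approach as the paper: both handle the converse by taking $V=K(G;H)$ and defining $\rho(g)$ as left multiplication by $\overline{g}$, then reading off $\theta_{\rho(g)}=\chi(g)$ and $\rho(g)\rho(h)=[g,h]\rho(gh)$. Your version is in fact more complete, supplying the uniqueness argument via Proposition~\ref{prop2}, the bijectivity check for $\rho(g)$, and the $\mathrm{rank}\geq 3$ adjustment, all of which the paper leaves implicit or omits.
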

\begin{proof}
Corresponding to each $G-$lattice over $K$, we have a representation of $G$ over $K$ and vice versa. So the first part is clear. Let $H=H(\chi,[_-,_-])$ be an extension of $K$ by $G$. For each $g\in G$ define $\rho(g):K(G;H)\rightarrow K(G;H)$ such that for each $v\in V$, $\rho(g)(v)=\overline{g}v$ where the product on the right is the product in $K(G;H)$. For $v\in V$ and $\alpha\in K$, $\rho(g)(\alpha v)=\overline{g}(\alpha v)=\chi(g)(\alpha)\overline{g}v=\chi(g)(\alpha)\rho(g)(v).$ Hence $\rho(g)$ is a semilinear automorphism having $\theta_{\rho(g)}=\chi(g)$. Also 
$$\rho(g)\rho(h)(v)=\overline{g}(\overline{h}(v))=\overline{g}\overline{h}(v)=[g,h]\overline{gh}(v)=[g,h]\rho(gh)(v)$$
and so $\rho:G\rightarrow SGL(V)$ is a representation associated with the factor system $(\chi,[_-,_-])$. The lattice $L(K(G;H))$ is a $G-$lattice with the action induced by $\overline{\rho}$. It is clear that the extension determined by $L$ is the given $H$. 
\end{proof}
The following example illustrate Theorem: \ref{thm4} for the $C_3$-lattice, $L(\mathbb{R}^3)$.
\begin{exam}\label{exam4}
Consider $\mathbb{R}^3$ and the the lattice $L(\mathbb{R}^3)$ of its subspaces. Let $C_3=\{1,a,a^2\}$ be the cyclic group of order $3$ generated by $a$. $L(\mathbb{R}^3)$ is a $C_3$-lattice  with respect to the action defined as follows.
\end{exam}
\begin{enumerate}
\item $1W=W$ for all $W\in L(\mathbb{R}^3).$
\item $aW$ is the subspace of $\mathbb{R}^3$ obtained by shifting the components of each vector in $W$ one position to the right. For example, $W=\{(x,y,z)\in \mathbb{R}^3\mid x+y=z\}$ is a subspace of $\mathbb{R}^3$ and $aW=\{(x,y,z)\in \mathbb{R}^3\mid x+z=y\}$.
\item $a^2W$ is the subspace of $\mathbb{R}^3$ obtained by shifting the components of each vector in $W$ two positions to the right. We have $a^2W=\{(x,y,z)\in \mathbb{R}^3\mid y+z=x\}$ for the above subspace.
\end{enumerate}
\par Let $v=(x,y,z)\in \mathbb{R}^3$ and $\langle v\rangle=\{(kx,ky,kz)\mid k\in \mathbb{R}\}$ be the subspace spanned by $v$. Then $1\langle v\rangle=\langle v\rangle=\langle (x,y,z)\rangle$,$a\langle v\rangle=\{(kz,kx,ky)\mid k\in\mathbb{R}\}=\langle(z,x,y)\rangle$ and $a^2\langle v\rangle=\{(ky,kz,kx)\mid k\in \mathbb{R}\}=\langle(y,z,x)\rangle$. Define $\rho(1)(x,y,z)=(x,y,x),\rho(a)(x,y,z)=(z,x,y)$ and $\rho(a^2)(x,y,z)$ =$(y,z,x)$. It is easy to see that $\rho(g)$ is an automorphism in $\mathbb{R}^3$ for each $g\in C_3$ and $\rho:C_3\rightarrow Aut\mathbb{R}^3$ which takes each $g\in C_3$ to $\rho(g)$ defined above is a group homomorphism. Hence $\rho$ is a linear representation of $C_3$ over $\mathbb{R}$ and is the representation corresponding to the $C_3$-lattice $L(\mathbb{R}^3)$.\\
Now we discuss the Schreier extension associated with $\rho$. Since $\rho$ is a linear representation, $\chi(g)=\theta_{\rho(g)}=I_{\mathbb{R}^*}$ and $[g,h]=1$ for all $g,h\in C_3$. So Schreier extension of $\mathbb{R}^*$ by $C_3$ is $H=\{k\bar{1}\mid k\in \mathbb{R}^*\}\cup\{k\bar{a}\mid k\in \mathbb{R}^*\}\cup\{k\bar{a^2}\mid k\in \mathbb{R}^*\}$ and the binary operation in $H$ is given by $k\bar{g}k'\bar{h}=k\chi(g)(k')[g,h]\bar{gh}=kk'\overline{gh}$. Define a map $\phi:\mathbb{R}^*\times C_3\rightarrow H$ such that $\phi(k,g)=k\bar{g}$. $\phi$ is a group homomorphism since $\phi(k,g)\phi(k',h)=k\bar{g}k'\bar{h}=kk'\overline{gh}=\phi(kk',gh)$. $\phi(k,g)=\phi(k',h)$ means that $k\bar{g}=k'\bar{h}$, so $k=k'$ and $g=h$ which implies $(k,g)=(k',h)$ and hence $\phi$ is injective. $\phi$ is surjective since for $k\bar{g}\in H$ there is always $(k,g)\in \mathbb{R}^*\times C_3$ such that $\phi(k,g)=k\bar{g}$. Hence $H$ is isomorphic to $\mathbb{R}^*\times C_3$. \\
The twisted group ring $\mathbb{R}(C_3,\mathbb{R}^*\times C_3)
= \{x\bar{1}+y\bar{a}+z\bar{a^2}\}$ is a vector space with respect to the addition and scalar multiplication defined by $$\big(x_1\bar{1}+y_1\bar{a}+z_1\bar{a^2}\big)+\big(x_2\bar{1}+y_2\bar{a}+z_2\bar{a^2}\big)=(x_1+x_2)\bar{1}+(y_1+y_2)\bar{a}+(z_1+z_2)\bar{a^2}$$ $$k\big(x\bar{1}+y\bar{a}+z\bar{a^2}\big)=kx\bar{1}+ky\bar{a}+kz\bar{a^2}$$ Then $\theta:\mathbb{R}^3\rightarrow \mathbb{R}(C_3,\mathbb{R}^*\times C_3)$ given by $\theta(x,y,z)= x\bar{1}+y\bar{a}+z\bar{a^2}$ is an isomorphism. So $L(\mathbb{R}^3)\cong L(\mathbb{R}(C_3,\mathbb{R}^*\times C_3))$. Note that $L(\mathbb{R}(C_3,\mathbb{R}^*\times C_3))$ is a $C_3$-lattice with respect to the product defined as,
\begin{enumerate}
\item $1W=W$
\item $aW=\{\bar{a}(x\bar{1}+y\bar{a}+z\bar{a^2})\mid x\bar{1}+y\bar{a}+z\bar{a^2}\in W\}=\{z\bar{1}+x\bar{a}+y\bar{a^2}\mid x\bar{1}+y\bar{a}+z\bar{a^2}\in W\}$
\item $a^2W=\{\bar{a^2}(x\bar{1}+y\bar{a}+z\bar{a^2})\mid x\bar{1}+y\bar{a}+z\bar{a^2}\in W\}=\{y\bar{1}+z\bar{a}+x\bar{a^2}\mid x\bar{1}+y\bar{a}+z\bar{a^2}\in W\}$
\end{enumerate}
Hence $C_3$-lattices $L(\mathbb{R}(C_3,\mathbb{R}^*\times C_3))$ and $L(\mathbb{R}^3)$ are isomorphic.   
\begin{rem}
$V$ need not be isomorphic with $K(G;H)$ in general. If we replace $C_3$ by $S_3$ in the above example and define an action of $S_3$ on $L(\mathbb{R}^3)$ as above, then $\mathbb{R}(S_3;\mathbb{R}^*\times S_3)$ is isomorphic with $\mathbb{R}^6$ but not with $\mathbb{R}^3$.
\end{rem}
\begin{thm}
Let $G$ be a group and $K$ be a division ring. If $H$ is the extension determined by the $G-$lattice $L$ over $K$,
\begin{enumerate}
\item $H$ is a projective extension if and only if $L$ is a projective $G$-lattice
\item $H$ is a semidirect product if and only if $L$ is a semilinear $G$-lattice
\item $L$ is a linear $G$-lattice if and only if $H$ is a linear extension and so $H\cong K^*\times G$.
\end{enumerate}
\end{thm}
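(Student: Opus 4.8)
The plan is to run everything through the two translation steps already in place. Given a $G$-lattice $L$ over $K$, Theorem~\ref{thm2} supplies a representation $\rho\colon G\to SGL(V)$ with $gW=\rho(g)W$ for all $g\in G$, $W\in L(V)$; Proposition~\ref{prop1} turns $\rho$ into the factor system $(\chi,[_-,_-])$ with $\chi(g)=\theta_{\rho(g)}$ and $\rho(g)\rho(h)=[g,h]\rho(gh)$, and $H=H(\chi,[_-,_-])$ is, by Theorem~\ref{thm4}, the extension determined by $L$. Each of the three equivalences will then be obtained by reading off, at the level of this fixed $\rho$, the defining condition for the relevant class of $G$-lattices, rewriting it as a condition on $\rho$, and then as a condition on $(\chi,[_-,_-])$, i.e.\ on $H$ — and running the chain backwards. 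Nothing beyond definition-chasing is involved once the correspondence $L\leftrightarrow\rho\leftrightarrow H$ is fixed.

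For item (1): $L$ is a projective $G$-lattice exactly when its associated $\rho$ can be chosen so that every $\rho(g)$ lies in $GL(V)$, i.e.\ so that $\theta_{\rho(g)}$ is the identity automorphism of $K$ for every $g\in G$. Since $\chi(g)=\theta_{\rho(g)}$ and each $\chi(g)$ is a ring automorphism of $K$ (hence fixes $0$), this says precisely that $\chi(g)=I_{K}$ for all $g$, equivalently $\chi(g)=I_{K^{*}}$ on $K^{*}$, which is exactly the defining property of a projective extension; when it holds the product in $H$ collapses to $(a\overline g)(b\overline h)=ab[g,h]\overline{gh}$. Conversely, if $H$ is projective then $\theta_{\rho(g)}=\chi(g)=I_{K^{*}}$ forces each $\rho(g)$ to be a linear automorphism, so $\rho$ is a projective representation and, by Example~\ref{exam3} together with Theorem~\ref{thm2}, $L$ is the projective $G$-lattice it determines.

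For item (2): $L$ is a semilinear $G$-lattice exactly when $\rho$ can be taken to be a group homomorphism, $\rho(gh)=\rho(g)\rho(h)$; since $\rho(g)\rho(h)=[g,h]\rho(gh)$ and $\rho(gh)$ is invertible, this is equivalent to $[g,h]=1$ for all $g,h\in G$, which is exactly the condition for $H$ to be a semidirect product (a split extension), and the reverse implications run the same chain backwards. Item (3) is then the conjunction of (1) and (2): a linear representation is precisely one that is both projective and semilinear, so $L$ is a linear $G$-lattice iff $\chi(g)=I_{K}$ and $[g,h]=1$ for all $g,h\in G$, iff $H$ is both projective and split, iff $H$ is a linear extension; and a Schreier extension that is both projective and split has product $(a\overline g)(b\overline h)=ab\,\overline{gh}$, so $H\cong K^{*}\times G$.

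The step that needs care — and the only real obstacle — is well-definedness rather than computation: Theorem~\ref{thm2} pins $\rho$ down only up to equivalence, and equivalent factor systems give isomorphic but not identical extensions, so I must check that ``$L$ is a projective (resp.\ semilinear, resp.\ linear) $G$-lattice'' is genuinely a property of $L$, and that ``$H$ is a projective (resp.\ split, resp.\ linear) extension'' is respected by the equivalences arising from the scalar maps $\eta\colon G\to K^{*}$ that appear in Theorem~\ref{thm2}. Concretely, I would read ``the extension determined by $L$'' as the extension built from a chosen defining representation and then verify that the three conditions on $H$ extracted above are invariant under those equivalences (noting, in particular, that when $\eta(g)$ need not be central one still controls $\theta_{\rho(g)}$ and $[g,h]$ through $(E4)$ and $(E5)$). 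Once this bookkeeping is discharged, the three equivalences follow from the definition-chasing above.
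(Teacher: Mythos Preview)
Your proposal is correct and follows essentially the same route as the paper: both argue by passing through the chain $L \leftrightarrow \rho \leftrightarrow (\chi,[_-,_-]) \leftrightarrow H$ and reading off, at the level of $\rho$, the defining conditions for projective, semilinear, and linear, then matching them to the corresponding conditions on the factor system. The paper's proof is terser and simply asserts each equivalence, whereas you spell out the intermediate steps and additionally flag the well-definedness issue (that $\rho$ is determined only up to equivalence), which the paper does not address explicitly; this extra care is a genuine improvement but not a different method.
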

\begin{proof}
 We can see that $L$ is a projective $G$-lattice if and only if corresponding representation $\rho$ of $G$ over $K$ is projective, that is if and only if the extension associated to $\rho$ is projective. Similarly, $L$ is a semilinear $G$-lattice if and only if corresponding representation $\rho$ of $G$ over $K$ is semilinear, that is if and only if the extension associated to $\rho$ is a semidirect product. If $H$ is the direct product $ K^*\times G$,  $H$ is a projective as well as a split extension of $K$ by $G$, and the representation $\rho$ associated with $H$ is both projective and semilinear, which is the linear representation of $G$ over $K$. So the $G$-lattice $L$ induced by $\rho$ is the linear $G$-lattice.
\end{proof}
The following theorem gives a necessary and sufficient condition so that the twisted group ring is an algebra.
\begin{thm}\label{thm6}
Let $L$ be a $G-$lattice over $K$ and $H$ be the extension determined by $L$. Then, the twisted group ring $K(G;H)$ is an algebra over $K$ if and only if $L$ is a projective $G-$lattice over the field $K$.
\end{thm}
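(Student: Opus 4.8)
The plan is to reduce the statement ``$K(G;H)$ is an algebra over $K$'' to two transparent internal conditions on the twisted group ring, and then translate these conditions back through the correspondences already established.

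\textbf{Step 1: unwinding the algebra axiom.} Recall $K(G;H)$ is already known to be an associative ring, with $cu=\sum_{g\in G}ca_g\bar g$ for $u=\sum_g a_g\bar g$, and with product governed by $c_k=\sum_{gh=k}a_g\,\chi(g)(b_h)\,[g,h]$. Being an algebra over $K$ means, in addition, that the left $K$-action is compatible with multiplication, i.e. $c(uv)=(cu)v=u(cv)$ for all $c\in K$ and $u,v\in K(G;H)$; note this already forces $K$ into the centre of the ring, hence forces $K$ to be commutative. The identity $(cu)v=c(uv)$ holds automatically, since left multiplication by $c$ distributes over each coefficient sum, so the whole content of the algebra axiom sits in $u(cv)=c(uv)$.

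\textbf{Step 2: extracting the two conditions.} First I would test $u(cv)=c(uv)$ on $u=\bar g$, $v=\bar h$: the left side is $\bar g(c\bar h)=\chi(g)(c)[g,h]\,\overline{gh}$ while the right side is $c(\bar g\,\bar h)=c[g,h]\,\overline{gh}$. Since $[g,h]\in K^{*}$ is invertible, equality for all $c,g,h$ is equivalent to $\chi(g)(c)=c$ for every $c\in K$ and $g\in G$, i.e. $\chi(g)=I_K$ for all $g$. Granting that, for general $u=\sum a_g\bar g$ and $v=\sum b_h\bar h$ the $k$-th coefficient of $u(cv)$ is $\sum_{gh=k}a_g\,c\,b_h\,[g,h]$ while that of $c(uv)$ is $\sum_{gh=k}c\,a_g\,b_h\,[g,h]$, and these agree for all choices precisely when $a_gc=ca_g$ for all elements of $K$, i.e. when $K$ is commutative, hence a field. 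Conversely, if $K$ is a field and $\chi(g)=I_K$ for all $g$, all three bilinearity identities are immediate. Thus $K(G;H)$ is an algebra over $K$ if and only if $K$ is a field and $\chi(g)=I_K$ for every $g\in G$.

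\textbf{Step 3: translating back to $L$.} Since $H$ is the extension determined by the $G$-lattice $L$, its factor system satisfies $\chi(g)=\theta_{\rho(g)}$, where $\rho$ is the representation of $G$ over $K$ corresponding to $L$ (Theorem~\ref{thm2} and Proposition~\ref{prop1}). Hence $\chi(g)=I_K$ for all $g$ says exactly that each $\rho(g)$ is a linear (not merely semilinear) automorphism, i.e. $\rho$ is a projective linear representation, which by the correspondence between $G$-lattices and representations means $L$ is a projective $G$-lattice; combining this with ``$K$ is a field'' is precisely the assertion that $L$ is a projective $G$-lattice over the field $K$. The only point needing care is making sure, in Step 2, that commutativity of $K$ is genuinely \emph{forced} by the algebra axioms rather than assumed; but once the component-wise identity $a_gcb_h=ca_gb_h$ is written down this is immediate, so I do not anticipate a serious obstacle.
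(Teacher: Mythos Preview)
Your proposal is correct and follows essentially the same route as the paper: both arguments reduce the algebra condition to the two internal identities $\chi(g)=I_K$ and commutativity of $K$, obtained by testing $u(cv)=c(uv)$ on suitable basis-type elements, and then invoke the correspondence between projective extensions, projective linear representations, and projective $G$-lattices. The only cosmetic differences are the order (the paper first extracts commutativity via $(b\bar 1)(a\bar 1)$ and then $\chi(g)=I_K$ via $\bar g(a\bar 1)$, whereas you do the reverse) and that you spell out explicitly why $(cu)v=c(uv)$ is automatic and why $\chi(g)=I_K$ amounts to $\rho(g)$ being linear; these are clarifications rather than a different argument.
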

\begin{proof}
Assume that $L$ is a projective $G-$lattice over the field $K$. To show that $K(G;H)$ an algebra it remains to prove that $(au)v=u(av)=a(uv)$ for all $a\in K$ and $u,v\in K(G;H)$. Since the product in $K(G;H)$ is obtained by distributively extending the product in $H$, it is enough to check the above property for the elements in the basis $\{\overline{g}\mid g\in G\}$ of $K(G;H)$. $\overline{g}(a\overline{h})=a(\overline{g}\overline{h})=(a\overline{g})\overline{h}$ since $H$ is a projective extension, this proves that $K(G;H)$ is a $K-$algebra.\\
Conversely let $K(G;H)$ be an algebra. Then $ba\overline{1}=(b\overline{1})(a\overline{1})=a(b\overline{1})=ab\overline{1}$ which implies that $ab=ba$ for all $a,b\in K^*$ and so $K$ is a field. $\chi(g)(a)\overline{g}=(a\overline{1})\overline{g}=a\overline{g}$ and so $\chi(g)(a)=a$ for all $a\in K^*$ and $g\in G$. Hence $\chi(g)=I_K$ for each $g\in G$ and so $H$ is a projective extension of $K$ by $G$.  
\end{proof}
Since $\rho:G\rightarrow SGL(V)$ is a semilinear projective representation, $gv=\rho(g)(v)$ is not an action of $G$ on $V$ in general. However, it may turn out to be an action by distributively extending the multiplication.

\begin{prop}
Let $G$ be a group, $K$ a division ring and $H$ be a Schreier extension of $K$ by $G$. If the representation $\rho:G\rightarrow SGL(V)$ of $G$ is associated with the extension $H$, then $V$ is a $K(G;H)$ module. Conversely let $M$ be a $K(G;H)$ module. Then $M$ is a module over $K$, for each $g\in G$ and $v\in V$, $\rho(g)(v)=\overline{g}v$ defines a semilinear automorphism $\rho(g)$ on $M$ and $\rho:G\rightarrow SGL(V)$ is a representation which determines the extension $H$.
\end{prop}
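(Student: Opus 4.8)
The plan is to obtain both directions by unwinding the twisted convolution product of $K(G;H)$ against the defining relations of a representation associated with $H$; the only genuine care required is in handling the non-commutativity of $K(G;H)$.

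For the forward direction I would start from $\rho:G\to SGL(V)$ associated with $H=H(\chi,[_-,_-])$, so that $\theta_{\rho(g)}=\chi(g)$ and $\rho(g)\rho(h)=[g,h]\rho(gh)$, and make $V$ a left $K(G;H)$-module by $\big(\sum_{g}a_g\overline g\big)v=\sum_{g}a_g\,\rho(g)(v)$, which is well defined because $\{\overline g\}$ is a $K$-basis of $K(G;H)$. Biadditivity in each argument is immediate from additivity of each $\rho(g)$ and the $K$-module structure of $V$. For the unit axiom I would first record that $(E1)$--$(E3)$ force $[1,1]=1$, $\chi(1)=I_K$ and $[1,h]=[g,1]=1$, so $\overline 1$ is the unit of $K(G;H)$, and that $\rho(1)\rho(1)=[1,1]\rho(1)=\rho(1)$ with $\rho(1)$ invertible gives $\rho(1)=I_V$; hence $\overline 1\cdot v=v$. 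The one substantive axiom is associativity $(u_1u_2)v=u_1(u_2 v)$: expanding $u_1,u_2$ in the basis and using semilinearity $\rho(g)(b w)=\chi(g)(b)\rho(g)(w)$ together with $\rho(g)\rho(h)=[g,h]\rho(gh)$, both sides reduce to $\sum_{k}\big(\sum_{gh=k}a_g\chi(g)(b_h)[g,h]\big)\rho(k)(v)$, which is precisely the action of the convolution product $u_1u_2$.

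For the converse I would start from a left $K(G;H)$-module $M$ and first note that $a\mapsto a\overline 1$ is a unital ring embedding $K\hookrightarrow K(G;H)$ (since $(a\overline 1)(b\overline 1)=ab\overline 1$ and $\overline 1=1_{K(G;H)}$), so restriction of scalars along it makes $M$ a module over $K$; write $V=M$. For each $g$ define $\rho(g):M\to M$ by $\rho(g)(v)=\overline g v$; additivity is a module axiom. From the convolution formula together with $[g,1]=[1,g]=1$ and $\chi(g)(1)=1$ one extracts the two identities
\[\overline g\,(a\overline 1)=(\chi(g)(a)\overline 1)\,\overline g,\qquad \overline g\,\overline h=[g,h]\,\overline{gh}\]
in $K(G;H)$. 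The first yields $\rho(g)(a\cdot v)=\chi(g)(a)\cdot\rho(g)(v)$, so $\rho(g)$ is semilinear with $\theta_{\rho(g)}=\chi(g)$ an automorphism of $K$; the second, combined with associativity of the module action and $\overline 1=1$, yields $\rho(g)\rho(h)=[g,h]\rho(gh)$, and in particular $\rho(g)\rho(g^{-1})=[g,g^{-1}]\,I_M$ and $\rho(g^{-1})\rho(g)=[g^{-1},g]\,I_M$, which are invertible as $[g,g^{-1}],[g^{-1},g]\in K^*$, so each $\rho(g)$ is bijective and $\rho(g)\in SGL(M)$. The two relations $\theta_{\rho(g)}=\chi(g)$ and $\rho(g)\rho(h)=[g,h]\rho(gh)$ then say exactly that $\rho:G\to SGL(M)$ is a semilinear projective representation associated with the factor system $(\chi,[_-,_-])$, i.e. that $\rho$ determines $H$.

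I expect the main obstacle — in the sense of the step one must execute carefully rather than any deep difficulty — to be the bookkeeping forced by the non-commutativity of $K(G;H)$: everything rests on cleanly deriving $\overline g\,\overline h=[g,h]\,\overline{gh}$ and $\overline g\,(a\overline 1)=(\chi(g)(a)\overline 1)\,\overline g$ from the twisted convolution and the normalisations $[1,1]=[1,h]=[g,1]=1$, $\chi(g)(1)=1$. With those in hand the rest is a routine verification of the module axioms and of the definition of a representation associated with an extension.
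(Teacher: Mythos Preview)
Your argument is correct and follows essentially the same route as the paper: the forward direction defines the action by $\big(\sum_g a_g\overline g\big)v=\sum_g a_g\rho(g)(v)$ and verifies the module axioms exactly as you outline, with the associativity check reducing both sides to $\sum_k\big(\sum_{gh=k}a_g\chi(g)(b_h)[g,h]\big)\rho(k)(v)$. For the converse the paper simply writes ``Converse follows'', so your careful derivation of $\overline g(a\overline 1)=(\chi(g)(a)\overline 1)\overline g$, $\overline g\,\overline h=[g,h]\overline{gh}$, and the bijectivity of $\rho(g)$ via $\rho(g)\rho(g^{-1})=[g,g^{-1}]I_M$ in fact supplies more detail than the original.
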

\begin{proof}
Assume that $\rho:G\rightarrow SGL(V)$ is a representation of $G$ over $K$.

For $s=\sum\limits_{g\in G}a_g\overline{g}$ and $v\in V$ define $sv=\sum\limits_{g\in G}a_g\rho(g)(v)$. Then $V$ is a $K(G;H)$-module with respect to this product. Consider $s=\sum\limits_{g\in G}a_g\overline{g}$, $t=\sum\limits_{g\in G}b_g\overline{g}$ in $K(G;H)$ and $u,v\in V$. Using the semilinearity of $\rho(g)$ and the $K-$module structure on $V$ we get,
\begin{enumerate}
\item $s(u+v)=\sum\limits_{g\in G}a_g\big(\rho(g)(u)+\rho(g)(v)\big)=su+sv$.
\item $(s+t)v=\sum\limits_{g\in G}a_g\rho(g)(v)+ b_g\rho(g)(v)=sv+tv$.
\item $s(tv)=\sum\limits_{g\in G}a_g\rho(g)\bigg(\sum\limits_{h\in G}b_h\rho(h)(v)\bigg)=\sum\limits_{g\in G}\sum\limits_{h\in G}a_g\chi(g)(b_h)\rho(g)\rho(h)(v)$\\=$\sum\limits_{k\in G}\bigg(\sum\limits_{gh=k}a_g\chi(g)(b_h)[g,h]\bigg)\rho(k)(v)=(st)v$.
\item $\bar{1}v=\rho(1)(v)=v$.
\item $(bs)v=\sum\limits_{g\in G}ba_g\rho(g)(v)=b\sum\limits_{g\in G}a_g\rho(g)(v)=b(sv)$.
\end{enumerate}
Converse follows.
\end{proof}
Thus we summerize that given an extension $H$ of $G$ by $K$, every $K(G;H)$-module $V$ induce a $G-$lattice on $L(V)$ and conversely, every $G-$lattice induces an extension $H$ and a $K(G; H)$-module. Moreover, when  $H$ and $H'$ are isomorphic extensions, $K(G; H)$ and $K(G; H')$ are isomorphic, and they induce the same action on the module $V$ corresponding to the $G-$lattice $L(V)$. 

\end{document}